\newtheorem{theorem}{Theorem}[section]
\newtheorem{lemma}[theorem]{Lemma}
\newtheorem{corollary}[theorem]{Corollary}
\theoremstyle{definition}
\newtheorem{definition}[theorem]{Definition}
\newtheorem{remark}[theorem]{Remark}
\newcommand{\holim}{\operatornamewithlimits{holim}}
\title{Condensation of the operad for multiplicative hyperoperads}
\author[Florian De Leger]{Florian De Leger}
\address{Mathematical Institute of the Academy \\ \v{Z}itn\'a 25, 115~67 Prague 1, Czech Republic}
\email{de-leger@math.cas.cz}
\author{Maro\v{s} Grego}
\address{Faculty of Mathematics and Physics, Charles University \\
Sokolovsk\'a 49/83, 186~75 Prague 8, Czech Republic}
\email{maros@grego.site}
\keywords{hyperoperads, condensation, $E_2$-operads}
\thanks{The first author was supported by RVO:67985840 and Praemium Academiae of Martin Markl}
\begin{document}
	
\begin{abstract}
	We construct a map of operads from an $E_2$-operad to the condensation of the operad for multiplicative hyperoperads. We deduce from it the existence of an $E_2$-action on the homotopy limit of the underlying functor of a multiplicative hyperoperad. This result is the higher dimensional analogue of a result due to Batanin and Berger implying Deligne's conjecture.
\end{abstract}
	
\maketitle

\tableofcontents

\section{Introduction}

In order to define weak $n$-categories, Baez and Dolan \cite{baezdolan} introduced a notion of \emph{plus construction}. This construction creates a new (symmetric coloured) operad $P^+$ from an operad $P$. In \cite{delegergrego}, we studied the geometric aspects of the iterations of this construction. More specifically, we focused on the operad $I^{+++}$ obtained by iterating this construction three times, starting from the initial one-coloured operad $I$. We called \emph{hyperoperads} the algebras for this operad. The explicit definition of a (multiplicative) hyperoperad will be given in Subsection \ref{subsectionmultiplicativehyperoperads} and the description of the operad $\mathcal{H}$ for multiplicative hyperoperads will be given in Subsection \ref{subsectionoperadformulthyperop}. We proved \cite[Corollary 6.13]{delegergrego} that for a multiplicative hyperoperad $\mathcal{A}$ satisfying some reduceness conditions, there is a triple delooping
\[
	\Omega^3 \mathrm{Map}_{\mathrm{HOp}} (\zeta,u^*(\mathcal{A})) \sim \holim_{\Omega_p} \mathcal{A}^\bullet,
\]
where $\Omega$ is the loop space functor, $\mathrm{Map}_{\mathrm{HOp}}(-,-)$ is the homotopy mapping space in the category of hyperoperads, $\zeta$ is the terminal hyperoperad, $u^*$ is the obvious forgetful functor and $\mathcal{A}^\bullet$ is the underlying functor of $\mathcal{A}$ (see Subsection \ref{subsectioncolouredoperads}). This result is the higher dimensional analogue of the double delooping proved independently by Turchin \cite{turchin} and Dwyer and Hess \cite{dwyerhess}.

For $m \geq 0$, $m$-fold loop spaces are related to $E_m$-operads through May's recognition principle \cite{may}. Therefore, a natural question to ask is whether there is in general an $E_3$-action on the homotopy limit of the underlying functor of a multiplicative hyperoperad. This is what we investigate in this paper. We will follow the techniques developed by Batanin and Berger in \cite{BataninBergerLattice}. In this paper, the authors introduced the notion of \emph{condensation} of a (symmetric) coloured operad to a one-coloured operad. They also introduced a coloured operad $\mathcal{L}$ called \emph{lattice path operad} together with a filtration of it. They computed the condensation of the $m$-th stage filtration $\mathcal{L}_m$ and deduced from it the existence of an $E_m$-action on the totalization of the underlying cosimplicial object of an algebra over $\mathcal{L}_m$. Similarly, we would like to compute the condensation of the operad for multiplicative hyperoperads. Our first conjecture was that the resulting one-coloured operad is an $E_3$-operad. However, this does not seem to be the case. Still, we are able to construct a map from an $E_2$-operad to this resulting one-coloured operad.

Here is a more detailed description of the contents of this paper. In Section \ref{sectionpreliminaries}, we will recall the notions of a coloured operad and its condensation. We will also recall the definition of the complete graph operad and multiplicative hyperoperads. In Section \ref{sectioncondensation}, we will prove our main result which is Theorem \ref{theoremmapfrome2tocoend}. We will start by giving the explicit description of the operad $\mathcal{H}$ for multiplicative hyperoperads. As in \cite{BataninBergerLattice}, the crucial ingredient is the construction of a complexity map $c$. In our case, it is a map from $\mathcal{H}$ to the third stage filtration $\mathcal{K}_3$ of the complete graph operad. This map $c$ allows us to define another coloured operad $\hat{\mathcal{H}}$, and the main argument in the proof of Theorem \ref{theoremmapfrome2tocoend} is that the condensation of $\hat{\mathcal{H}}$ yields an $E_2$-operad. We will also rely on Lemma \ref{lemmaformulacoend}, which says that the condensation can be computed using classifying spaces of categories. This allows us to use the tools of homotopy theory, such as Quillen's Theorem A and Thomason's Theorem.

\subsection*{Acknowledgement}

We want to thank Clemens Berger for our interesting discussions on this topic during the workshop \emph{Homotopical Algebra and Higher Structures} in Oberwolfach.

\section{Preliminaries}\label{sectionpreliminaries}

%
%

\subsection{Coloured operads and their algebras}\label{subsectioncolouredoperads}

Recall that a (symmetric) \emph{coloured operad} $\mathcal{O}$ in a symmetric monoidal category $(\mathcal{E},\otimes,e)$ is given by a set of \emph{colours} $I$ and for each integer $k \geq 0$ and $k+1$-tuple of colours $(i_1,\ldots,i_k;i)$, an object $\mathcal{O}(i_1,\ldots,i_k;i) \in \mathcal{E}$, together with
\begin{itemize}
	\item for all $i \in I$, a map $e \to \mathcal{O}(i;i)$ called \emph{unit},
	\item \emph{multiplication} maps
	\[
	m: \mathcal{O}(i_1,\ldots,i_k;i) \otimes \mathcal{O}(i_{11},\ldots,i_{1l_1};i_1) \otimes \ldots \otimes \mathcal{O}(i_{k1},\ldots,i_{kl_k};i_k) \to \mathcal{O}(i_{11},\ldots,i_{kl_k};i),
	\]
	\item an action of the symmetric group $\Sigma_k$ on $\mathcal{O}(i_1,\ldots,i_k;i)$,
\end{itemize}
satisfying associativity, unitality and equivariance axioms.

When we restrict to $k=1$, we get a category enriched in $\mathcal{E}$. This category is called the \emph{underlying category} of $\mathcal{O}$ and will be denoted by $\mathcal{O}_1$.

An algebra $\mathcal{A}$ over a coloured operad $\mathcal{O}$ is given by a collection $\mathcal{A}_i$, for $i \in I$, of objects in $\mathcal{E}$, together with maps
\[
	\mathcal{O}(i_1,\ldots,i_k;i) \otimes \mathcal{A}_{i_1} \otimes \ldots \otimes \mathcal{A}_{i_k} \to \mathcal{A}_i,
\]
satisfying associativity, unitality and equivariance axioms.

The restriction of $\mathcal{A}$ to $\mathcal{O}_1$ is called the \emph{underlying functor} of $\mathcal{A}$ and is denoted by $\mathcal{A}^\bullet$.

\subsection{Coends}

We assume that the reader is familiar with the notion of coends. However, since the explicit definition will be used in the proof of Lemma \ref{lemmaformulacoend}, we recall it now. First, recall that a cowedge for a functor $F: C^{op} \times C \to D$ is given by an object $d \in D$ together with maps $\pi_c: F(c,c) \to d$, for all $c \in C$, such that for all morphisms $f: c \to c'$ in $C$, the following square commutes
\begin{equation}\label{cowedge}
\xymatrix{
	F(c',c) \ar[r]^{F(f,id_c)} \ar[d]_{F(id_{c'},f)} & F(c,c) \ar[d]^{\pi_c} \\
	F(c',c') \ar[r]_-{\pi_{c'}} & d
}
\end{equation}
A map between cowedges is a map $t: d \to d'$ such that $t \cdot \pi_c = \pi'_c$ for all $c \in C$. The coend $\int^c F(c,c)$ of $F$ is the initial cowedge.

\subsection{Condensation of a coloured operad}

Let $\mathcal{O}$ be a coloured operad in $\mathrm{Set}$. If $(\mathcal{E},\otimes,e)$ is a cocomplete symmetric monoidal category, there is a monoidal functor $\mathrm{Set} \to \mathcal{E}$ sending a set $X$ to $\coprod_{X} e$, so that $\mathcal{O}$ can be seen as an operad in $\mathcal{E}$. For any colour $i$ of $\mathcal{O}$, we get a functor
\begin{equation}
	\mathcal{O}(-,\ldots,-;i): \mathcal{O}_1^{op} \otimes \ldots \otimes \mathcal{O}_1^{op} \to \mathcal{E}.
\end{equation}
Let us assume that a functor $\delta: \mathcal{O}_1 \to \mathcal{E}$ is fixed.

\begin{definition}\label{definitionrealization}
	For $\mathcal{O}$ a coloured operad and $k \geq 0$, let $\xi(\mathcal{O})_k: \mathcal{O}_1 \to \mathcal{E}$ be the functor defined by the coend formula
	\[
		\xi(\mathcal{O})_k(i) = \int^{i_1,\ldots,i_k} \mathcal{O}(i_1,\ldots,i_k;i) \otimes \delta(i_1) \ldots \otimes \delta(i_k).
	\]
\end{definition}

We assume further that $\mathcal{E}$ is closed, so that the category $\mathcal{E}^{\mathcal{O}_1}$, of functor from $\mathcal{O}_1$ to $\mathcal{E}$, is enriched in $\mathcal{E}$. According to \cite[Proposition 1.8]{BataninBergerLattice}, $\xi(\mathcal{O})$ has the structure of a functor-operad \cite[Definition 4.1]{mccluresmith} on $\mathcal{E}^{\mathcal{O}_1}$.

Let
\[
	\mathrm{Coend}_\mathcal{O}(k) := \underline{\mathrm{Hom}}_{\mathcal{O}_1}(\delta,\xi(\mathcal{O})_k(\delta,\ldots,\delta)),
\]
where $\underline{\mathrm{Hom}}_{\mathcal{O}_1}$ is the internal hom in $\mathcal{E}^{\mathcal{O}_1}$. According to \cite[Proposition 4.4]{mccluresmith}, $\mathrm{Coend}_{\mathcal{O}}$ has a structure of (one-coloured) operad. This one-coloured operad is called \emph{condensation} of $\mathcal{O}$.

\subsection{The complete graph operad}

For $k \geq 0$, let $\mathcal{K}(k) = \mathbb{N}^{k \choose 2} \times \Sigma_k$. For $\sigma \in \Sigma_k$ and $ij \in {k \choose 2}$, let $\sigma_{ij} \in \Sigma_2$ be the permutation of $i$ and $j$ under $\sigma$. We define a partial order on $\mathcal{K}$ as follows. For $(\mu,\sigma),(\nu,\tau) \in \mathcal{K}(k)$, we write $(\mu,\sigma) \leq (\nu,\tau)$ if, for all $ij \in {k \choose 2}$, either $\mu_{ij}<\nu_{ij}$, or $\mu_{ij}=\nu_{ij}$ and $\sigma_{ij}=\tau_{ij}$.

In order to define the operadic structure of $\mathcal{K}$, let us recall that an element of $\mathcal{K}(k)$ can be considered as a complete graph on the vertex set $\{1,\ldots,k\}$ equipped with an edge-labelling by the integers $\mu_{ij}$ and a permutation of the vertices. In this context, the multiplication can be described as insertion of complete graphs inside the vertices of the first graph then renumbering of the vertices, as in the following picture:
\[
 \begin{tikzpicture}
	 \draw[dotted] (0,-.8) -- (0,1);
	 \draw[fill=white] (0,1) circle (.192) node{$1$};
	 \draw[fill=white] (0,-.8) circle (.192) node{$2$};
	 \draw (0,.1) node{$1$};
	 
	 \begin{scope}[shift={(2.5,-.1)}]
	 \draw[dotted] ({1.2*cos(30)},{-1.2*sin(30)}) -- (0,1) -- ({-1.2*cos(30)},{-1.2*sin(30)}) -- ({1.2*cos(30)},{-1.2*sin(30)});

	 \draw [fill=white] ({-1.2*cos(30)},{-1.2*sin(30)}) circle (.192) node{$1$};
	 \draw [fill=white] (0,1) circle (.192) node{$2$};
	 \draw [fill=white] ({1.2*cos(30)},{-1.2*sin(30)}) circle (.192) node{$3$};
	 
	 \draw (0,{-1.2*sin(30)}) node{$2$};
	 \draw ({.6*cos(30)},{.5-.6*sin(30)}) node{$2$};
	 \draw ({-.6*cos(30)},{.5-.6*sin(30)}) node{$0$};
	 \end{scope}
	 
	 \begin{scope}[shift={(5,0)}]
	 \draw[dotted] (0,-.8) -- (0,1);
	 \draw[fill=white] (0,1) circle (.192) node{$1$};
	 \draw[fill=white] (0,-.8) circle (.192) node{$2$};
	 \draw (0,.1) node{$0$};
	 \end{scope}
	 
	 \draw (6.45,0) node{$\mapsto$};
	 
	 \begin{scope}[shift={(9,0)},scale=1.2]
	 \draw[dotted] ({cos(54)},{-sin(54)}) -- ({cos(18)},{sin(18)}) -- (0,1) -- ({-cos(18)},{sin(18)}) -- ({-cos(54)},{-sin(54)}) -- ({cos(54)},{-sin(54)}) -- (0,1) -- ({-cos(54)},{-sin(54)}) -- ({cos(18)},{sin(18)}) -- ({-cos(18)},{sin(18)}) -- ({cos(54)},{-sin(54)});
	 \draw[fill=white] ({cos(18)},{sin(18)}) circle (.16) node{$3$};
	 \draw[fill=white] (0,1) circle (.16) node{$2$};
	 \draw[fill=white] ({-cos(18)},{sin(18)}) circle (.16) node{$1$};
	 \draw[fill=white] ({-cos(54)},{-sin(54)}) circle (.16) node{$4$};
	 \draw[fill=white] ({cos(54)},{-sin(54)}) circle (.16) node{$5$};
	 
	 \draw (0,{-sin(54)}) node{$0$};
	 \draw ({cos(18)/2+cos(54)/2},{sin(18)/2-sin(54)/2}) node{$1$};
	 \draw ({cos(18)/2},{sin(18)/2+1/2}) node{$2$};
	 \draw ({-cos(18)/2},{sin(18)/2+1/2}) node{$0$};
	 \draw ({-cos(18)/2-cos(54)/2},{sin(18)/2-sin(54)/2}) node{$1$};
	 
	 \draw ({cos(18)/2-cos(54)/2},{sin(18)/2-sin(54)/2}) node{$1$};
	 \draw (0,{sin(18)}) node{$2$};
	 \draw ({cos(54)/2-cos(18)/2},{sin(18)/2-sin(54)/2}) node{$1$};
	 \draw ({cos(54)/2},{1/2-sin(54)/2}) node{$1$};
	 \draw ({-cos(54)/2},{1/2-sin(54)/2}) node {$1$};
	 \end{scope}
 \end{tikzpicture}
\]
Finally, recall that $\mathcal{K}$ admits a filtration. For $m \geq 0$, the $m$-th stage filtration is the suboperad given by $\mathcal{K}_m(k) = \{0,\ldots,m\}^{k \choose 2} \times \Sigma_k$.

\subsection{Multiplicative hyperoperads}\label{subsectionmultiplicativehyperoperads}

A hyperoperad $\mathcal{A}$ in a symmetric monoidal category $(\mathcal{E},\otimes,e)$ is given by a collection of objects in $\mathcal{E}$ over the set of (isomorphism classes of) planar trees together with
\begin{itemize}
	\item for all $n \geq 0$, a map
	\[
	e \to \mathcal{A}(\gamma_n)
	\]
	called \emph{unit}, where $\gamma_n$ is the $n$-th corolla, that is the planar tree with a unique vertex and $n$ leaves,
	\item for all planar trees $S$ and $T$ such that $T$ has as many leaves as the number of edges above a vertex $v$ of $S$, a map
	\[
	\mathcal{A}(S) \otimes \mathcal{A}(T) \to \mathcal{A}(S \circ_v T),
	\]
	where $S \circ_v T$ is the tree obtained by inserting $T$ inside $v$, as in the following picture:
	\[
	\begin{tikzpicture}[scale=.9]
	\draw (0,-.3) -- (0,0) -- (-.6,.6) -- (-.6,1.2);
	\draw (0,0) -- (.6,.6) -- (.6,1.2);
	\draw (.1,1.2) -- (.6,.6) -- (1.1,1.2);
	\draw[fill] (0,0) circle (1.2pt);
	\draw[fill] (-.6,.6) circle (1.2pt);
	\draw[fill] (.6,.6) circle (1.2pt) node[right]{$v$};
	\draw (0,-.3) node[below]{$S$};
	
	\begin{scope}[shift={(3.5,0)}]
	\draw (0,-.3) -- (0,0);
	\draw (-.5,.6) -- (0,0) -- (.5,.6);
	\draw (-.9,1.2) -- (-.5,.6) -- (-.1,1.2);
	\draw[fill] (0,0) circle (1.2pt);
	\draw[fill] (-.5,.6) circle (1.2pt);
	\draw (0,-.3) node[below]{$T$};
	\end{scope}
	
	\begin{scope}[shift={(6.6,0)}]
	\draw (0,-.3) -- (0,0) -- (-.6,.6) -- (-.6,1.2);
	\draw (0,0) -- (.6,.6);
	\draw (.2,1.2) -- (.6,.6) -- (1,1.2);
	\draw (-.15,1.8) -- (.2,1.2) -- (.55,1.8);
	\draw[fill] (0,0) circle (1.2pt);
	\draw[fill] (-.6,.6) circle (1.2pt);
	\draw[fill] (.6,.6) circle (1.2pt);
	\draw[fill] (.2,1.2) circle (1.2pt);
	\draw (0,-.3) node[below]{$S \circ_v T$};
	\end{scope}
	\end{tikzpicture}
	\]
\end{itemize}
These maps should satisfy associativity and unitality axioms analogous to the ones in \cite[Definition 11]{markl2008operads}. 
There is an obvious notion of \emph{maps between hyperoperads}, so that hyperoperads form a category. A hyperoperad is called \emph{multiplicative} when it is equipped with a map from the constant hyperoperad $\zeta$ given by the unit $e$ for each planar tree.

\section{Condensation of the operad for multiplicative hyperoperads}\label{sectioncondensation}

%

\subsection{The operad for multiplicative hyperoperads}\label{subsectionoperadformulthyperop}

	In this subsection we will describe explicitly the operad $\mathcal{H}$ for multiplicative hyperoperads. The colours for this operad are (isomorphism classes of) planar trees. Let us define a \emph{circled planar tree} as a planar tree with circles drawn on it, such that the circles never intersect each other and each one of them corresponds unequivocally to a subtree:
	\[
	\begin{tikzpicture}
	\draw (0,-.5) -- (0,0) -- (-1.3,1.45) -- (-1,1.75);
	\draw (-.8,2.6) -- (-1,1.75) -- (-.4,2.35);
	\draw (-1.3,1.45) -- (-1.7,2) -- (-1.7,2.65);
	\draw (0,0) -- (.7,.6) -- (.4,1) -- (.15,2);
	\draw (.4,1) -- (.65,2);
	\draw (1.3,1.75) -- (.7,.6) -- (1.65,1.4);
	
	\draw (.4,.7) circle (1);
	\draw (.55,.85) circle (.65);
	\draw (.7,.6) circle (.25);
	\draw (-1.3,1.65) circle (.75);
	\draw (-1.15,1.6) circle (.45);
	
	\draw[fill] (0,0) circle (1pt);
	\draw[fill] (.7,.6) circle (1pt);
	\draw[fill] (.4,1) circle (1pt);
	\draw[fill] (-1.3,1.45) circle (1pt);
	\draw[fill] (-1.7,2) circle (1pt);
	\draw[fill] (-1,1.75) circle (1pt);
	
%
	\end{tikzpicture}
	\]
	The planar tree \emph{inside} a circle is the subtree corresponding to this circle, but where all the subtrees corresponding to circles further inside have been contracted to a corolla. For example, the planar tree inside the biggest circle of the previous picture is the following:
	\[
		\begin{tikzpicture}
			\draw (0,-.25) -- (0,0);
			\draw (-.6,.7) -- (0,0) -- (.6,.7);
			\draw (0,1.4) -- (.6,.7) -- (1.2,1.4);
			\draw (.4,1.4) -- (.6,.7) -- (.8,1.4);
			
			\draw[fill] (0,0) circle (1pt);
			\draw[fill] (.6,.7) circle (1pt);
		\end{tikzpicture}
	\]
	For $k \geq 0$ and a $k+1$-tuple $(T_1,\ldots,T_k;T)$ of planar trees, $\mathcal{H}(T_1,\ldots,T_k;T)$ is the set of (isomorphism classes of) circled planar trees, where the circles can be either black or white, such that
	\begin{itemize}
		\item the white circles are numbered from $1$ to $k$ and, for $j \in \{1,\ldots,k\}$, $T_j$ is tree inside the $j$-th white circle,
		\item $T$ is the planar tree obtained by deleting all the circles,
		\item there are no black circles around corollas, no pairs of black circles one directly inside the other, and each black circle must be inside some white circle.
	\end{itemize}
The multiplication of this operad is given by insertion of circled planar trees inside the white circles of the first circled planar tree then deletion of black circles if necessary.

Note that the underlying category of $\mathcal{H}$ is the non-symmetric version $\Omega_p$ of the dendroidal category \cite[Definition 2.2.1]{moerdijktoen}.

\subsection{The complexity map}\label{subsectioncomplexitymap}
Let us assume that $k \geq 0$ and a $k+1$-tuple of planar trees $(T_1,\ldots,T_k;T)$ are given. We define
\[
c: \mathcal{H}(T_1,\ldots,T_k;T) \to \mathcal{K}_3(k)
\]
as the function which sends a circled planar tree to the pair $(\mu,\sigma)$, where for $1 \leq i < j \leq k$,
\[
\mu_{ij} = \begin{cases}
0 &\text{if the circles $i$ and $j$ are one to the left of the other,} \\
1 &\text{if the circles $i$ and $j$ are one below the other,} \\
2 &\text{if the circles $i$ and $j$ are one outside the other.}
\end{cases}
\]
and $\sigma_{ij}$ is neutral (resp. non-neutral) if the circle $i$ (resp. $j$) is to the left, below or outside the circle $j$ (resp. $i$). Note that $c$ is not a map of operads, but we always have the relation
\begin{equation}\label{equationcomplexitymap}
	c(m(o,o_1,\ldots,o_k)) \leq m(c(o),c(o_1),\ldots,c(o_k)).
\end{equation}

\subsection{A formula for the coend}

The objective of this subsection is to prove Lemma \ref{lemmaformulacoend} which describes the coend of Definition \ref{definitionrealization} explicitly.


\begin{definition}
	Let $\mathcal{O}$ be a coloured operad, $i$ a colour of $\mathcal{O}$ and $k \geq 0$. We define $\mathcal{O}_k/i$ as the category whose
	\begin{itemize}
		\item objects are operations $o \in \mathcal{O}(i_1,\ldots,i_k;i)$, where $(i_1,\ldots,i_k)$ is a $k$-tuples of colours of $\mathcal{O}$,
		\item morphisms from $o \in \mathcal{O}(i_1,\ldots,i_k;i)$ to $o' \in \mathcal{O}(i'_1,\ldots,i'_k;i)$ are given by $k$-tuples of unary operations
		\[
			p_1 \in \mathcal{O}(i'_1;i_1),\ldots,p_k \in \mathcal{O}(i'_k;i_k)
		\]
		such that $m(o',p_1,\ldots,p_k) = o$.
	\end{itemize}
\end{definition}

Note that when $k=1$, we get the classical notion of a comma category.

\begin{remark}\label{remarkconceptualexplanation}
	For a planar tree $T$ and $k \geq 0$, the morphisms in $\mathcal{H}_k/T$ replace white circles by bigger white circles and possibly delete black circles. For example, if $T$ is the free living edge and $k=2$, the category contains the following diagram, where the dotted circles represent white circles:
	\[
		\begin{tikzpicture}
			\begin{scope}[shift={(0,1.5)}]
			\draw (0,-.6) -- (0,.6);
			\draw[densely dotted] (0,0) circle (.5);
			\draw[densely dotted] (0,0) circle (.2);
			\draw (.32,0) node{$1$};
			\draw (.62,0) node{$2$};
			\end{scope}
	
			\begin{scope}[shift={(0,-1.5)}]
			\draw (0,-.6) -- (0,.6);
			\draw[densely dotted] (0,0) circle (.5);
			\draw[densely dotted] (0,0) circle (.2);
			\draw (.32,0) node{$2$};
			\draw (.62,0) node{$1$};
			\end{scope}
			
			\begin{scope}[shift={(3,0)}]
			\draw (0,-.7) -- (0,.7);
			\draw (0,0) circle (.6);
			\draw[densely dotted] (0,-.25) circle (.2);
			\draw[densely dotted] (0,.25) circle (.2);
			\draw (.32,-.25) node{$2$};
			\draw (.32,.25) node{$1$};
			\end{scope}
	
			\begin{scope}[shift={(-3,0)}]
			\draw (0,-.7) -- (0,.7);
			\draw (0,0) circle (.6);
			\draw[densely dotted] (0,-.25) circle (.2);
			\draw[densely dotted] (0,.25) circle (.2);
			\draw (.32,-.25) node{$1$};
			\draw (.32,.25) node{$2$};
			\end{scope}
			
			\draw[->] (-2,-.5) -- (-1,-1);
			\draw[->] (2,-.5) -- (1,-1);
			\draw[->] (-2,.5) -- (-1,1);
			\draw[->] (2,.5) -- (1,1);
		\end{tikzpicture}
	\]
	There is an analogy between the position of circles on a planar tree, if we assume that these circles can be either one below the other or one outside the other, and the position of distinct points in the plane. This is the conceptual explanation of Theorem \ref{theoremmapfrome2tocoend}.
\end{remark}

In the next lemma, we will assume that $\mathcal{E}=\mathrm{Cat}$ is the symmetric monoidal category of small categories. We will also fix $\delta: \mathcal{O}_1 \to \mathrm{Cat}$ to be the functor which sends $i$ to $\mathcal{O}_1/i$.

\begin{lemma}\label{lemmaformulacoend}
	Let $\mathcal{O}$ be a coloured operad and $i$ a colour of $\mathcal{O}$. For all $k \geq 0$, the coend $\xi(\mathcal{O})_k(i)$ of Definition \ref{definitionrealization} is isomorphic to $\mathcal{O}_k/i$.
\end{lemma}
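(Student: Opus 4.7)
The plan is to prove the lemma by exhibiting $\mathcal{O}_k/i$ as the coend via an explicit cowedge and then checking the universal property directly. Essentially everything reduces to operadic associativity and unitality of the composition $m$.

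First, I would define the cowedge
\[
\pi_{(i_1,\ldots,i_k)}: \mathcal{O}(i_1,\ldots,i_k;i) \times \mathcal{O}_1/i_1 \times \cdots \times \mathcal{O}_1/i_k \longrightarrow \mathcal{O}_k/i
\]
on objects by $(o,q_1,\ldots,q_k) \mapsto m(o,q_1,\ldots,q_k)$. Since $\mathcal{O}(i_1,\ldots,i_k;i)$ is discrete when regarded in $\mathrm{Cat}$, every morphism of the source has the form $(\mathrm{id}_o, r_1,\ldots,r_k)$ with $r_l$ a morphism of $\mathcal{O}_1/i_l$, and the tuple $(r_1,\ldots,r_k)$ is, by the very definition of $\mathcal{O}_k/i$, a morphism between $m(o,q_1,\ldots,q_k)$ and $m(o,q'_1,\ldots,q'_k)$. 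The cowedge compatibility square for a $k$-tuple $(f_1,\ldots,f_k)$ of unary operations becomes the associativity identity $m(m(o',f_l),q_l) = m(o', m(f_l,q_l))$, with the analogous statement on morphisms.

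Next, given a cowedge $\rho$ with apex a category $X$, I would construct the unique $\Phi: \mathcal{O}_k/i \to X$ with $\Phi\pi = \rho$. Every object $o$ of $\mathcal{O}_k/i$ can be written as $\pi(o,\mathrm{id}_{i_1},\ldots,\mathrm{id}_{i_k})$, so I am forced to set $\Phi(o) := \rho_{(i_l)}(o,\mathrm{id}_{i_1},\ldots,\mathrm{id}_{i_k})$. For a morphism $(p_1,\ldots,p_k): o \to o'$ in $\mathcal{O}_k/i$, i.e.\ one with $m(o',p_1,\ldots,p_k)=o$, applying the cowedge identity to the $k$-tuple $(p_l)_l$ of unary operations (with the $q_l = \mathrm{id}$) yields
\[
\rho_{(i_l)}(o,\mathrm{id}) = \rho_{(i_l)}(m(o',p_l),\mathrm{id}) = \rho_{(i'_l)}(o', p_1,\ldots,p_k),
\]
so I can declare $\Phi(p_1,\ldots,p_k) := \rho_{(i'_l)}(\mathrm{id}_{o'},p_1,\ldots,p_k)$, a morphism in $X$ from $\rho_{(i'_l)}(o',p_l) = \Phi(o)$ to $\rho_{(i'_l)}(o',\mathrm{id}) = \Phi(o')$.

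The remaining work is to check functoriality. Preservation of identities is immediate from unitality of $m$. For composition, given $(p_l): o \to o'$ and $(p'_l): o' \to o''$, I would factor the morphism $(\mathrm{id}_{o''}, m(p'_l,p_l))$ inside the source of $\pi_{(i''_l)}$ as the composite of $(\mathrm{id}_{o''},p'_l)$ after $(\mathrm{id}_{o''},p_l)$, apply functoriality of $\rho_{(i''_l)}$, and then use one further instance of the cowedge identity to recognise the lower factor as $\Phi(p_l)$. Uniqueness of $\Phi$ is automatic since all objects and morphisms of $\mathcal{O}_k/i$ arise in the image of $\pi$. The one genuinely fiddly point is bookkeeping the variance: the coend is contravariant in the operadic input colours but covariant in the $\delta$-arguments, while morphisms of $\mathcal{O}_k/i$ themselves mix the two directions, so one must keep careful track of which way each morphism points. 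Once the conventions are pinned down, every check reduces to operadic associativity.
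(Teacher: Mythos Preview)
Your proof is correct and follows essentially the same strategy as the paper: exhibit the multiplication map as a cowedge and then verify the universal property by observing that any comparison $t$ is pinned down by $t(o)=\rho(o,\mathrm{id},\ldots,\mathrm{id})$, with the cowedge identities for $\rho$ doing the rest. Your treatment is in fact more thorough than the paper's, which stays at the level of objects and leaves the action on morphisms and the functoriality check implicit; your explicit handling of $\Phi$ on morphisms via $\rho_{(i'_l)}(\mathrm{id}_{o'},p_1,\ldots,p_k)$ and the variance bookkeeping fill in exactly the details the paper glosses over.
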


\begin{proof}
	We will prove that $\mathcal{O}_k/i$ is the desired coend. First we prove that it is a cowedge. Let $i_1,\ldots,i_k$ be a $k$-tuple of colours of $\mathcal{O}$. There is an obvious functor
	\[
		\pi_{i_1,\ldots,i_k}: \mathcal{O}(i_1,\ldots,i_k;i) \times \delta(i_1) \times \ldots \times \delta(i_k) \to \mathcal{O}_k/i
	\]
	given by the multiplication of $\mathcal{O}$. The squares such as \ref{cowedge} commute thanks to the associativity of $\mathcal{O}$.
	
	Now we prove that it is the initial cowedge. Let $X \in \mathcal{E}$ equipped with functors
	\[
		\psi_{i_1,\ldots,i_k}: \mathcal{O}(i_1,\ldots,i_k;i) \times \delta(i_1) \times \ldots \times \delta(i_k) \to X
	\]
	If $t: \mathcal{O}_k/i \to X$ is a map between cowedges, then for $o \in \mathcal{O}(i_1,\ldots,i_k;i)$,
	\[
		t(o) = t \cdot \pi_{i_1,\ldots,i_k} (o,id_{i_1},\ldots,id_{i_k}) = \psi_{i_1,\ldots,i_k} (o,id_{i_1},\ldots,id_{i_k}),
	\]
	which proves that it is unique. To prove that it exists, we need to check that this indeed gives a map between cowedges. Let $(f_1,\ldots,f_k): (i_1,\ldots,i_k) \to (i'_1,\ldots,i'_k)$ be a morphism in $(\mathcal{O}_1)^k$. Then we have
	\begin{align*}
		t \cdot \pi_{i'_1,\ldots,i'_k}(o,f_1,\ldots,f_k) &= t \cdot m (o,f_1,\ldots,f_k)\\
		&= \psi_{i_1,\ldots,i_k} (m(o,f_1,\ldots,f_k),id_{i_1},\ldots,id_{i_k})\\
		&= \psi_{i'_1,\ldots,i'_k} (o,f_1,\ldots,f_k).
	\end{align*}
	The last equality comes from the fact that $X$ is a cowedge, so the squares such as \ref{cowedge} commute. This concludes the proof.
\end{proof}

%
%
%
%
%
%
%
%
%

\subsection{Proof of the main result}

For $(\mu,\sigma) \in \mathcal{K}_3(k)$ and a planar tree $T$, let $\mathcal{H}_{(\mu,\sigma)}/T$ be the full subcategory of $\mathcal{H}_k/T$ of operations $o \in \mathcal{H}(T_1,\ldots,T_k;T)$ such that $c(o) \leq (\mu,\sigma)$. We have the following lemma, which is inspired from \cite[Lemma 14.8]{mccluresmith}.

\begin{lemma}\label{lemmacontractible}
	Let $(\mu,\sigma) \in \mathcal{K}_3(k)$ and $T$ be a planar tree. If for all $ij \in {k \choose 2}$, $\mu_{ij} \geq 1$, then the nerve of $\mathcal{H}_{(\mu,\sigma)}/T$ is contractible.
\end{lemma}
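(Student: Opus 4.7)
The plan is to show that the poset $\mathcal{H}_{(\mu,\sigma)}/T$ admits a terminal object; contractibility of the nerve then follows immediately.

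First I would note that $\mathcal{H}_k/T$ is (equivalent to) a poset. Given objects $o \in \mathcal{H}(T_1, \ldots, T_k; T)$ and $o' \in \mathcal{H}(T'_1, \ldots, T'_k; T)$, the equation $m(o', p_1, \ldots, p_k) = o$ pins down each unary $p_j \in \mathcal{H}(T_j; T'_j)$ uniquely: it is the circled planar tree encoding the inclusion of the $j$-th white circle of $o$ into the $j$-th white circle of $o'$, together with those black circles of $o$ sitting in the annular region between the two. Thus at most one morphism $o \to o'$ exists, and the resulting order corresponds exactly to the enlargement-of-white-circles relation of Remark \ref{remarkconceptualexplanation}.

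Under the hypothesis $\mu_{ij} \geq 1$ I would construct a maximum element $o^*$. The data $(\mu, \sigma)$ prescribes, for each pair $i<j$, a definite hierarchical relation: the two white circles should be nested if $\mu_{ij}=2$ (with direction given by $\sigma_{ij}$), or one strictly above the other on a descendant branch if $\mu_{ij}=1$; the forbidden case $\mu_{ij}=0$ would force them to remain side by side and prevent absorption into a rooted hierarchy. These pairwise prescriptions assemble into a rooted-forest structure on $\{1,\ldots,k\}$, which is realizable on $T$ whenever $\mathcal{H}_{(\mu,\sigma)}/T$ is non-empty. I then define $o^*$ to be the circled planar tree with underlying tree $T$, no black circles, and $k$ numbered white circles realizing this hierarchy, each taken as large as possible inside $T$. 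By construction $c(o^*) = (\mu,\sigma)$, so $o^* \in \mathcal{H}_{(\mu,\sigma)}/T$. For any $o \in \mathcal{H}_{(\mu,\sigma)}/T$ the inequality $c(o) \leq (\mu,\sigma) = c(o^*)$ and the maximality of the white circles of $o^*$ force each $j$-th white circle of $o$ to embed into the $j$-th white circle of $o^*$; from these inclusions, together with the black circles of $o$ contained in the corresponding annular regions of $o^*$, one reads off the unique morphism $o \to o^*$.

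The main obstacle is the rigorous construction of $o^*$: one has to check that the pairwise constraints $(\mu_{ij},\sigma_{ij})_{i<j}$ actually assemble into a consistent rooted forest realizable on $T$, and that there is a canonical maximal such realization. The hypothesis $\mu_{ij} \geq 1$ is exactly what allows every pair of circles to be arranged hierarchically and absorbed into such a canonical maximum; the case $\mu_{ij}=3$ (formally allowed in $\mathcal{K}_3(k)$ although never attained by $c$) introduces extra freedom in choosing the hierarchy and requires a small additional check that the resulting object is still terminal and not merely weakly terminal.
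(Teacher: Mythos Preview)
Your approach --- exhibiting a terminal object in $\mathcal{H}_{(\mu,\sigma)}/T$ --- does not succeed, and the obstruction is precisely the case $\mu_{ij}=1$ that you treat as unproblematic. If $\mu_{ij}=1$ with $\sigma_{ij}$ neutral, your $o^*$ has circle~$i$ strictly \emph{below} circle~$j$, so circle~$i$ of $o^*$ lies on the root path beneath circle~$j$ and cannot extend onto side branches: enlarging it past the relevant branching vertex would swallow circle~$j$ and raise the pairwise complexity to~$2$, leaving the subcategory. But the condition $c(o)\leq(\mu,\sigma)$ also admits objects with $c(o)_{ij}=0$, where circles~$i$ and~$j$ sit side by side on distinct branches. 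For such an $o$, circle~$i$ may live on a branch disjoint from circle~$i$ of $o^*$, and since morphisms in $\mathcal{H}_k/T$ only \emph{enlarge} each numbered white circle, there is no map $o\to o^*$. Concretely, take $T=\gamma_2$, $k=2$, $(\mu_{12},\sigma)=(1,\mathrm{id})$: the object with circle~$1$ on one leaf and circle~$2$ on the other lies in $\mathcal{H}_{(\mu,\sigma)}/T$ but admits no morphism to any object with circle~$1$ on the root edge. So no terminal object exists, and your ``rooted forest'' picture, while correct for the relative positions inside $o^*$ itself, does not dominate objects of strictly smaller complexity.

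The paper never claims a global terminal object. Instead it argues by induction on~$k$: one deletes the white circle $i=\sigma^{-1}(1)$ via a functor $F:\mathcal{H}_{(\mu,\sigma)}/T\to\mathcal{H}_{(\mu',\sigma')}/T$ and applies Quillen's Theorem~A, reducing to contractibility of each comma category $F/o'$. Only the \emph{fibre} $F_{o'}$ is asserted to have a terminal object --- the configuration in which the single re-inserted circle~$i$ is taken maximal relative to the already-fixed circles of~$o'$ --- and then $F_{o'}\hookrightarrow F/o'$ is identified as a reflective inclusion, hence a nerve equivalence. The essential point is that terminality is claimed only one circle at a time, with all others held fixed; this local statement is exactly what your global construction cannot deliver.
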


\begin{proof}
	We proceed by induction on $k$. When $k=0$, $\mathcal{H}_{(\mu,\sigma)}/T$ contains only one element which is given by the planar tree $T$ without circles. So the result is trivial. Now, let us assume that $k \geq 1$. Let $(\mu',\sigma')$ be the complete graph obtained by deleting the vertex $i := \sigma^{-1}(1)$ from $(\mu,\sigma)$. There is a functor $F: \mathcal{H}_{(\mu,\sigma)}/T \to \mathcal{H}_{(\mu',\sigma')}/T$ which deletes the white circle $i$ and some black circles if necessary. Let us prove that $F$ induces a weak equivalence between nerves. According to Quillen's Theorem A, it suffices to show that for all $o' \in \mathcal{H}_{(\mu',\sigma')}/T$, the nerve of $F/o'$ is contractible. As in \cite[Section 5.3]{cisinski}, we consider the canonical inclusion functor $F_{o'} \to F/o'$, where $F_{o'}$ is the fibre of $F$ over $o'$. First, note that $F_{o'}$ has a terminal object. Indeed, since $i = \sigma^{-1}(1)$ and for all $ij \in {k \choose 2}$, $\mu_{ij} \geq 1$, the objects of $F_{o'}$ consists of the circled planar $o'$ where a white circle has been added, which is either below, above or outside all other white circles. The terminal object is when this added white circle is maximal. Therefore $F_{o'}$ has a contractible nerve. Finally, note that $F_{o'}$ is a reflective subcategory of $F/o'$. Indeed, the inclusion functor has a left adjoint which sends $F(o) \to o'$ to the circled planar tree obtained by adding the white circle $i$ of $o$ in $o'$. This concludes the proof.
%
\end{proof}

We now work in the case where $\mathcal{E}=\mathrm{Top}$ is the category of compactly generated spaces. The functor $\delta: \Omega_p \to \mathrm{Top}$ sends $T$ to $B(\Omega_p/T)$, where $B$ is the \emph{classifying space} functor, given by geometric realization of the nerve.

\begin{theorem}\label{theoremmapfrome2tocoend}
	There is a map of operads from an $E_2$-operad to $\mathrm{Coend}_\mathcal{H}$.
\end{theorem}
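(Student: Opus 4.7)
The plan is to introduce an auxiliary coloured operad $\hat{\mathcal{H}}$ in $\mathrm{Cat}$, built from $\mathcal{H}$ and $\mathcal{K}_2$ via the complexity map $c$, together with a forgetful projection $p\colon\hat{\mathcal{H}}\to\mathcal{H}$. By functoriality of the condensation, $p$ will induce a map $\mathrm{Coend}_{\hat{\mathcal{H}}}\to\mathrm{Coend}_\mathcal{H}$, and the bulk of the work is to identify $\mathrm{Coend}_{\hat{\mathcal{H}}}$ with a model for the $E_2$-operad.

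Let $\iota\colon\mathcal{K}_2\hookrightarrow\mathcal{K}_3$ denote the operad map obtained by adding $1$ to every edge label, whose image is the suboperad of $(\mu,\sigma)$ with $\mu_{ij}\geq 1$. For a tuple $(T_1,\ldots,T_k;T)$ of planar trees, I would define $\hat{\mathcal{H}}(T_1,\ldots,T_k;T)$ to be the poset of pairs $(o,(\mu,\sigma))$ with $o\in\mathcal{H}(T_1,\ldots,T_k;T)$, $(\mu,\sigma)\in\mathcal{K}_2(k)$ and $c(o)\leq\iota(\mu,\sigma)$, ordered by $(o,(\mu,\sigma))\leq(o,(\mu',\sigma'))$ whenever $(\mu,\sigma)\leq(\mu',\sigma')$. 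Coordinatewise insertion makes $\hat{\mathcal{H}}$ into a coloured operad in $\mathrm{Cat}$: the complexity constraint is preserved thanks to inequality~\eqref{equationcomplexitymap} and the fact that $\iota$ respects the operad multiplication. Since $\mathcal{K}_2(1)$ is a point, $\hat{\mathcal{H}}_1=\mathcal{H}_1=\Omega_p$, and the forgetful projection $p$ (viewing $\mathcal{H}$ as a discrete-valued operad) is a morphism of coloured operads over $\Omega_p$.

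To compute $\mathrm{Coend}_{\hat{\mathcal{H}}}$, I would apply Lemma~\ref{lemmaformulacoend} in $\mathrm{Cat}$ to identify $\xi(\hat{\mathcal{H}})_k(\delta,\ldots,\delta)(T)\cong\hat{\mathcal{H}}_k/T$. The projection $(o,(\mu,\sigma))\mapsto(\mu,\sigma)$ realises $\hat{\mathcal{H}}_k/T$ as the Grothendieck construction of the functor $\mathcal{K}_2(k)\to\mathrm{Cat}$ sending $(\mu,\sigma)$ to $\mathcal{H}_{\iota(\mu,\sigma)}/T$. Every element of $\iota(\mathcal{K}_2(k))$ satisfies the hypothesis of Lemma~\ref{lemmacontractible}, so each fibre has contractible nerve; Thomason's theorem then yields a natural, $\Sigma_k$-equivariant weak equivalence $B(\hat{\mathcal{H}}_k/T)\simeq B\mathcal{K}_2(k)$, independent of $T$. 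Because $\delta(T)=B(\Omega_p/T)$ is contractible ($\Omega_p/T$ has $\mathrm{id}_T$ as terminal object), passing through the internal hom $\underline{\mathrm{Hom}}_{\Omega_p}(\delta,-)$ transports this levelwise equivalence to an equivalence $\mathrm{Coend}_{\hat{\mathcal{H}}}\simeq B\mathcal{K}_2$ of one-coloured operads, and $B\mathcal{K}_2$ is a model for $E_2$ by Berger.

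The main obstacle is the last step: one must verify that the equivalence $B(\hat{\mathcal{H}}_k/T)\simeq B\mathcal{K}_2(k)$ is sufficiently natural in $T$ and $\Sigma_k$-equivariant for it to be transported through the internal hom compatibly with operadic composition, and then rectify the resulting zig-zag of operad weak equivalences into an honest operad map from a cofibrant $E_2$-operad into $\mathrm{Coend}_\mathcal{H}$.
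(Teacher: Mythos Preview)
Your approach is essentially the paper's own proof: the same auxiliary operad $\hat{\mathcal{H}}$ built via $\iota$ and the complexity constraint, the same Grothendieck-construction/Thomason argument combined with Lemma~\ref{lemmacontractible}, and the same appeal to Berger's identification of $B\mathcal{K}_2$ as an $E_2$-operad. The one point where you overcomplicate matters is the final ``obstacle'': no rectification is needed, because once $\mathrm{Coend}_{\hat{\mathcal{H}}}\xrightarrow{\sim}B\mathcal{K}_2$ is a weak equivalence of operads, $\mathrm{Coend}_{\hat{\mathcal{H}}}$ is \emph{itself} an $E_2$-operad, and the forgetful projection already furnishes an honest operad map $\mathrm{Coend}_{\hat{\mathcal{H}}}\to\mathrm{Coend}_{\mathcal{H}}$.
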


\begin{proof}
	The proof is inspired from the proof of \cite[Theorem 3.8]{BataninBergerLattice}. Let $\iota: \mathcal{K}_2 \to \mathcal{K}_3$ be the map of operads which sends $\mu_{ij}$ to $\mu_{ij}+1$. Let $\hat{\mathcal{H}}$ be the coloured operad whose set of colours is the set of planar trees. For $k \geq 0$ and $(T_1,\ldots,T_k;T)$ a $k+1$-tuple of planar trees, $\hat{\mathcal{H}}(T_1,\ldots,T_k;T)$ is the set of pairs $(o,(\mu,\sigma))$, such that $o \in \mathcal{H}(T_1,\ldots,T_k;T)$, $(\mu,\sigma) \in \mathcal{K}_2(k)$ and $c(o) \leq \iota(\mu,\sigma)$. Since $c$ satisfies \eqref{equationcomplexitymap} and $\iota$ is a map of operad, $\hat{\mathcal{H}}$ is indeed an operad.
	
	We will prove that there is a zigzag of operadic maps
	\begin{equation}\label{equationzigzag}
	\mathrm{Coend}_\mathcal{H} \leftarrow \mathrm{Coend}_{\hat{\mathcal{H}}} \xrightarrow{\sim} B(\mathcal{K}_2)
	\end{equation}
	where the right map is a weak equivalence . First, there is an obvious projection map $\hat{\mathcal{H}} \to \mathcal{H}$ which forgets the pair $(\mu,\sigma)$. The left map in \eqref{equationzigzag} is induced by this map. According to Lemma \ref{lemmaformulacoend}, we have
	\[
		\mathrm{Coend}_{\hat{\mathcal{H}}}(k) = \holim_{T \in \Omega_p} B(\hat{\mathcal{H}}_k/T).
	\]
	The category $\hat{\mathcal{H}}_k/T$ can be obtained as the Grothendieck construction for the functor $\Phi: \mathcal{K}_2(k) \to \mathrm{Cat}$ sending $(\mu,\sigma)$ to $\mathcal{H}_{\iota(\mu,\sigma)}/T$. Therefore, according to Thomason's Theorem, the classifying space of $\hat{\mathcal{H}}_k/T$ can be computed as the homotopy colimit of $B\Phi$. The right map in \eqref{equationzigzag} is induced by the unique natural transformation from $\Phi$ to the constant functor to the terminal category. This natural transformation induces a pointwise weak equivalence between classifying spaces according to Lemma \ref{lemmacontractible}. Therefore, the right map in \eqref{equationzigzag} is a weak equivalence of operads. 
%
	The conclusion follows from \cite[Theorem 1.16]{bergercombinatorial}, which states that $B(\mathcal{K}_2)$ is an $E_2$-operad.
\end{proof}

\begin{remark}
	One could be tempted to define $\hat{\mathcal{H}}$ as the coloured operad whose operations are pairs operations $o \in \mathcal{H}$ and $(\mu,\sigma) \in \mathcal{K}_3$ such that $c(o) \leq (\mu,\sigma)$. Then one could try to construct a weak equivalence of operads $\mathrm{Coend}_{\hat{\mathcal{H}}} \to B(\mathcal{K}_3)$. Unfortunately this does not work. The reason is that Lemma \ref{lemmacontractible} does not apply for $(\mu,\sigma)$ such that there is $ij \in {k \choose 2}$ where $\mu_{ij}=0$. For example, if $T$ is a linear tree, $\mathcal{H}_{(\mu,\sigma)}/T$ is empty. Indeed, it is impossible to draw two circles, one to the left of the other, on a linear tree.
\end{remark}

\begin{corollary}
	For any multiplicative hyperoperad $\mathcal{A}$, there is an $E_2$-action on the homotopy limit of the underlying functor of $\mathcal{A}$.
\end{corollary}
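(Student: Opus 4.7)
The plan is to observe that the corollary is essentially a transport of structure along the map of operads produced by Theorem \ref{theoremmapfrome2tocoend}, once one recalls the defining property of the condensation: for a multiplicative hyperoperad $\mathcal{A}$ viewed as an $\mathcal{H}$-algebra in $\mathrm{Top}$, the one-coloured operad $\mathrm{Coend}_\mathcal{H}$ naturally acts on the homotopy limit of the underlying functor $\mathcal{A}^\bullet : \Omega_p \to \mathrm{Top}$. This is the analogue, in our setting, of the statement \cite[Proposition 4.4]{mccluresmith} / \cite{BataninBergerLattice} that the condensation of a coloured operad $\mathcal{O}$ acts on $\underline{\mathrm{Hom}}_{\mathcal{O}_1}(\delta, \mathcal{A}^\bullet)$ for any $\mathcal{O}$-algebra $\mathcal{A}$.

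First I would spell out how this general principle specializes here. Take $\delta: \Omega_p \to \mathrm{Top}$ to be the functor $T \mapsto B(\Omega_p/T)$ introduced before Theorem \ref{theoremmapfrome2tocoend}. Because $\Omega_p/T$ has a terminal object, $\delta$ is pointwise contractible, and the standard cofinality argument shows that $\underline{\mathrm{Hom}}_{\Omega_p}(\delta, \mathcal{A}^\bullet)$ computes $\holim_{\Omega_p} \mathcal{A}^\bullet$ up to weak equivalence. The functor-operad structure on $\xi(\mathcal{H})$ from \cite[Proposition 1.8]{BataninBergerLattice} together with the $\mathcal{H}$-algebra structure on $\mathcal{A}$ yields maps
\[
\mathrm{Coend}_\mathcal{H}(k) \otimes \underline{\mathrm{Hom}}_{\Omega_p}(\delta,\mathcal{A}^\bullet)^{\otimes k} \to \underline{\mathrm{Hom}}_{\Omega_p}(\delta,\mathcal{A}^\bullet),
\]
making $\holim_{\Omega_p} \mathcal{A}^\bullet$ into an algebra over $\mathrm{Coend}_\mathcal{H}$.

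Then I would invoke Theorem \ref{theoremmapfrome2tocoend}: the map of operads from an $E_2$-operad $\mathcal{E}_2$ to $\mathrm{Coend}_\mathcal{H}$ allows us to restrict the action above along $\mathcal{E}_2 \to \mathrm{Coend}_\mathcal{H}$, producing the desired $E_2$-action on $\holim_{\Omega_p} \mathcal{A}^\bullet$.

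The only mildly subtle point is the identification of $\underline{\mathrm{Hom}}_{\Omega_p}(\delta,\mathcal{A}^\bullet)$ with the homotopy limit $\holim_{\Omega_p} \mathcal{A}^\bullet$; this is a standard calculation using that $\delta(T) = B(\Omega_p/T)$ is the canonical cofibrant model for the constant functor at a point in the projective model structure on $\mathrm{Top}^{\Omega_p}$, so that the enriched end computing $\underline{\mathrm{Hom}}_{\Omega_p}(\delta,\mathcal{A}^\bullet)$ coincides with the Bousfield--Kan formula for the homotopy limit. Once this identification is in place, the corollary is immediate.
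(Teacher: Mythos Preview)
Your proposal is correct and follows essentially the same approach as the paper: obtain a $\mathrm{Coend}_\mathcal{H}$-action on $\holim_{\Omega_p}\mathcal{A}^\bullet$ from the general machinery of Batanin--Berger, then restrict along the map of Theorem~\ref{theoremmapfrome2tocoend}. The paper compresses your discussion of the identification $\underline{\mathrm{Hom}}_{\Omega_p}(\delta,\mathcal{A}^\bullet)\simeq\holim_{\Omega_p}\mathcal{A}^\bullet$ into a single citation of \cite[Proposition~1.5]{BataninBergerLattice}, but the content is the same.
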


\begin{proof}
	According to \cite[Proposition 1.5]{BataninBergerLattice}, since $\mathcal{A}$ is an algebra of $\mathcal{H}$, there is a $\mathrm{Coend}_\mathcal{H}$-action on $\holim_{\Omega_p} \mathcal{A}^\bullet$. It can be restricted to an $E_2$-action according to Theorem \ref{theoremmapfrome2tocoend}.
\end{proof}

%
%
%
%
%
%
%

\bibliographystyle{plain}
\bibliography{condensation}

\begin{thebibliography}{10}

\bibitem{baezdolan}
John~C Baez and James Dolan.
\newblock Higher-dimensional algebra iii. n-categories and the algebra of
  opetopes.
\newblock {\em Advances in Mathematics}, 135(2):145--206, 1998.

\bibitem{BataninBergerLattice}
Michael Batanin and Clemens Berger.
\newblock The lattice path operad and {H}ochschild cochains.
\newblock {\em Contemporary Mathematics}, 504:23--52, 2009.

\bibitem{bergercombinatorial}
Clemens Berger.
\newblock Combinatorial models for real configuration spaces and
  {$E_n$}-operads.
\newblock {\em Contemporary Mathematics}, 202:37--52, 1997.

\bibitem{cisinski}
Denis-Charles Cisinski.
\newblock {\em Les pr{\'e}faisceaux comme mod{\`e}les des types d'homotopie}.
\newblock Soci{\'e}t{\'e} math{\'e}matique de France, 2006.

\bibitem{delegergrego}
Florian De~Leger and Maro{\v{s}} Grego.
\newblock Triple delooping for multiplicative hyperoperads.
\newblock {\em arXiv:2309.15055}, 2023.

\bibitem{dwyerhess}
William Dwyer and Kathryn Hess.
\newblock Long knots and maps between operads.
\newblock {\em Geometry \& Topology}, 16(2):919--955, 2012.

\bibitem{markl2008operads}
Martin Markl.
\newblock Operads and props.
\newblock {\em Handbook of algebra}, 5:87--140, 2008.

\bibitem{may}
J~Peter May.
\newblock {\em The geometry of iterated loop spaces}, volume 271.
\newblock Springer, 2006.

\bibitem{mccluresmith}
James~E McClure and Jeffrey~H Smith.
\newblock Cosimplicial objects and little n-cubes, i.
\newblock {\em American journal of mathematics}, 126(5):1109--1153, 2004.

\bibitem{moerdijktoen}
Ieke Moerdijk and Bertrand To{\"e}n.
\newblock {\em Simplicial methods for operads and algebraic geometry}.
\newblock Springer Science \& Business Media, 2010.

\bibitem{turchin}
Victor Turchin.
\newblock Delooping totalization of a multiplicative operad.
\newblock {\em Journal of Homotopy and Related Structures}, 9(2):349--418,
  2014.

\end{thebibliography}

\end{document}